\documentclass[11pt]{amsart}

\usepackage{amssymb,amsthm,amsfonts,latexsym,mathtools}

\usepackage{amsmath}
\usepackage{mathrsfs}
\usepackage{stmaryrd}
\usepackage[utf8]{inputenc}
\usepackage{color}
\usepackage{enumitem}
\usepackage{accents}
\usepackage{hyperref}
\usepackage[noabbrev]{cleveref}
\usepackage{tikz}
\usepackage{tikz-cd}
\usetikzlibrary{arrows.meta, positioning}
\usetikzlibrary{shapes,arrows}
\usepackage{pbox}

\newcommand{\lcm}{\mathrm{lcm}}
\newcommand{\LC}{\mathcal{L}}
\newcommand{\supp}{\mathrm{supp}}
  \makeatletter
  \newcommand{\numberlike}[2]{%
     \expandafter\def\csname c@#1\endcsname{%
         \expandafter\csname c@#2\endcsname}%
  }
  \makeatother

  \def\DefaultNumberTheoremWithin{section}

  \theoremstyle{plain}
  \newtheorem{lemma}{Lemma}
     \numberwithin{lemma}{\DefaultNumberTheoremWithin}
     \labelformat{lemma}{Lemma~#1}
  
     \numberwithin{claim}{\DefaultNumberTheoremWithin}
     \numberlike{claim}{lemma}
     \labelformat{claim}{Claim~#1}
  \newtheorem{theorem}{Theorem}
     \numberwithin{theorem}{\DefaultNumberTheoremWithin}
     \numberlike{theorem}{lemma}
     \labelformat{theorem}{Theorem~#1}
  \newtheorem{corollary}{Corollary}
     \numberwithin{corollary}{\DefaultNumberTheoremWithin}
     \numberlike{corollary}{lemma}
     \labelformat{corollary}{Corollary~#1}
     
  \newtheorem{proposition}{Proposition}
     \numberwithin{proposition}{\DefaultNumberTheoremWithin}
     \numberlike{proposition}{lemma}
     \labelformat{proposition}{Proposition~#1}

     \numberwithin{conjecture}{\DefaultNumberTheoremWithin}
     \numberlike{conjecture}{lemma}
     \labelformat{conjecture}{Conjecture~#1}

  \newtheorem*{warning*}{Warning}

  \theoremstyle{definition}
  \newtheorem{definition}{Definition}
     \numberwithin{definition}{\DefaultNumberTheoremWithin}
     \numberlike{definition}{lemma}
     \labelformat{definition}{Definition~#1}

  \theoremstyle{definition}
  
     \numberwithin{question}{\DefaultNumberTheoremWithin}
     \numberlike{question}{lemma}
     \labelformat{question}{Question~#1}

  \theoremstyle{definition}
  
     \numberwithin{problem}{\DefaultNumberTheoremWithin}
     \numberlike{problem}{lemma}
     \labelformat{problem}{Problem~#1}
  \theoremstyle{remark}
  \newtheorem{remark}{Remark}

     \numberwithin{remark}
     {\DefaultNumberTheoremWithin}
     \numberlike{remark}{lemma}
     \labelformat{remark}{Remark~#1}
  \theoremstyle{remark}

  \newtheorem{example}{Example}
     \numberwithin{example}{\DefaultNumberTheoremWithin}
     \numberlike{example}{lemma}
     \labelformat{example}{Example~#1}
    
\labelformat{equation}{(#1)}
\labelformat{figure}{Figure~#1}
\labelformat{table}{Table~#1}
\labelformat{chapter}{Chapter~#1}
\labelformat{appendix}{Appendix~#1}
\labelformat{section}{Section~#1}
\def\H{\mathcal {H}}

\title{{\parbox{\textwidth}{\centering 
 On Some Properties of LCM-Lattices of Edge Ideals of k-Uniform Hypergraphs}}}
\author[Muneeba]{Muneeba Mansha}
\address{
}
\author[S. Ahmad]{Sarfraz Ahmad}
\address{COMSATS University Islamabad (Lahore Campus), Lahore-Pakistan.}
\email{muneeba.math@gmail.com, Sarfrazahmad@cuilahore.edu.pk }
\begin{document}
\maketitle
\section{Abstract}
In this article, we investigate the combinatorial and algebraic properties of the lcm-lattice associated with the edge ideal of a hypergraph. Let $\H$ be a hypergraph, $I(\H)$ its corresponding edge ideal in a polynomial ring in $n$ variables, and $\mathrm{Icm}(I(\H))$ the associated lcm-lattice. We establish conditions under which the lcm-lattice of an edge ideal is Boolean, modular, or complemented. Furthermore, we extend these results to the case of the product of lcm-lattices in the complemented case. Additionally, we study the effects of polarization on the lcm-lattices of $I(\H)$ and its polarized ideal.\\
AMS Classification: 06A06, 06A07, 06A11.\\
Keywords: posets, Boolean, modular, complemented lattices.

\section{introduction}
  Hypergraphs and their associated algebraic structures have long been central objects of study in combinatorial commutative algebra. Given a hypergraph $\mathcal{H}$, its edge ideal $I(\mathcal{H})$, defined in a polynomial ring over $n$ variables, encodes the combinatorial structure of $\mathcal{H}$ in an algebraic form. The study of the $\lcm$-lattice, denoted by $\mathrm{Icm}(I(\mathcal{H}))$, provides a lattice-theoretic perspective on the generators of $I(\mathcal{H})$.

The structural properties of $\lcm$-lattices such as being Boolean, modular, or complemented which play a crucial role in determining the algebraic and combinatorial behavior of the underlying ideal. In this article, we identify precise conditions under which the $\lcm$-lattice of a hypergraph's edge ideal possesses these properties. We further investigate how these properties extend to the product of $\lcm$-lattices in the complemented case and analyze the effect of polarization on the $\lcm$-lattice structure.

Foundational background on monomial ideals can be found in the work of Herzog and Hibi \cite{HerzogHibi2011}. Classical results on posets and enumerative techniques, which are central to the study of monomial ideals and their associated lattices, are presented in Stanley’s monograph \cite{StanleyRP2011}. More recently, Dorang \cite{DorangMc2025, DorangMT2025} investigated structural properties of $\lcm$-lattices of monomial ideals. The concept of the $\lcm$-lattice of a monomial resolution was introduced by Gasharov, Peeva, and Welker \cite{WelkerPV1999}, who showed that if there exists a map between two $\lcm$-lattices that is bijective on atoms and preserves joins, then a resolution of one ideal induces a resolution of the other. In particular, if the map is an isomorphism, the two ideals share the same total Betti numbers and projective dimension. 

Kuei-Nuan Lin and Sonja Mapes \cite{KUEISONJA2022} established a connection between the dual hypergraph and the $\lcm$-lattice of a given square-free monomial ideal. Moreover, Kimura, Rinaldo, and Terai \cite{KIMURATERAI} showed that the projective dimension of a monomial ideal depends on the $1$-skeleton structure of its dual hypergraph. Some results on the homotopy of complemented lattices are also given in \cite{sara}.
  \section{Results and Discussions}
   This section presents the basic concepts, results, and discussions. Let $G = (V,E)$ be a simple undirected graph with vertex set $V = \{1, \dots, n\}$ and edge set $E$. More generally, a $k$-uniform hypergraph $\mathcal{H} = (V,E)$ consists of a vertex set $V = \{1, \dots, n\}$ and an edge set $E$, where each edge is a $k$-element subset of $V$. The corresponding edge ideal $I(\mathcal{H})$ is the monomial ideal in the polynomial ring $S = \mathbb{K}[x_1, \dots, x_n]$ over a field $\mathbb{K}$, generated by the square-free monomials 
\[
x_{i_1} \cdots x_{i_k} \quad \text{for each } \{i_1, \dots, i_k\} \in E
\]
(see \cite{LinMcCullough2013, GhoshPaulos2024} for details).
 

    

\begin{definition}
     A poset is a pair $(S, \preceq)$, where $S$ is a set and $\preceq$ is a binary relation on $S$ satisfying:
\begin{itemize}
      \item {Reflexivity:} $x \preceq x$ for all $x \in S$;
     \item {Antisymmetry:} if $x \preceq y$ and $y \preceq x$, then $x = y$;
     \item {Transitivity:} if $x \preceq y$ and $y \preceq z$, then $x \preceq z$.
\end{itemize}
If these conditions hold, the relation $\preceq$ is called a \emph{partial order}.
\end{definition} 
  
\begin{definition}
     A \emph{lattice} $\LC$ is a poset $(E, \leq)$ such that, for any two elements $a$ and $b$ in $\LC$, there exists a unique greatest lower bound, denoted by $a \wedge b$ and called the \emph{meet} of $a$ and $b$, and a unique least upper bound, denoted by $a \vee b$ and called the \emph{join} of $a$ and $b$.
\end{definition}
     A lattice can be visualized through its Hasse diagram, where elements are arranged according to their rank, i.e., smaller elements (lower rank) appear below larger ones (higher rank). Line segments are drawn to connect each element with those it directly covers. For a detailed introduction to lattices and order theory, we refer to \cite{BirkhoffG1967,DaveyPristley2002}. 
\begin{definition}
     The \emph{$\lcm$-lattice} of $I(G)$, denoted $\LC(I(G))$, is the lattice whose elements are all least common multiples of subsets of the minimal generators of $I(G)$, partially ordered by divisibility. We denote the unique maximal element (the least common multiple of all generators) by $\hat{1}$ and the minimal element by $\hat{0}$.
\end{definition}
%

%
\subsection{Hypergraphs and Boolean Lattices}
\begin{definition}\label{blndef}
    Let $\LC$ denote the set of all subsets of $[r]$, ordered by inclusion. Then $\LC$ is a lattice, called the \emph{Boolean lattice of rank $r$}, and is defined as
    \[\LC = \{ A \mid A \subseteq \{1,\ldots,r\} \},\]
    ordered by inclusion. 
\end{definition}

\begin{theorem}\label{Booleanhyper}
    Let \( \H = (V, E) \) be a $k$-uniform hypergraph, and let \( I(\H) = (x_{a_1}\cdots x_{a_k} \mid \{a_i,\ldots, a_k\} \in E(\H)) \) be its edge ideal with \( \LC(I(\H)) \) to be its $\lcm$-lattice. Then the following are equivalent:
\begin{itemize}
     \item[(a)] $\LC (I(\H))$ is isomorphic to a Boolean lattice
    \item[(b)] every hyperedge \( e \in E(\H) \) contains at least one vertex \( v_e \in e \) such that \( v_e \notin e' \) for all other hyperedges \( e' \in E(\H) \setminus \{e\} \).
\end{itemize}
\end{theorem}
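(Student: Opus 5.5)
The plan is to reduce both conditions to the injectivity of the natural map from subsets of generators to the lattice. Write $E(\H)=\{e_1,\dots,e_m\}$ and $m_i=\prod_{v\in e_i}x_v$ for the minimal generators of $I(\H)$. Consider the surjective, order-preserving map
\[
\varphi\colon 2^{[m]}\longrightarrow \LC(I(\H)),\qquad \varphi(A)=\lcm(m_i\mid i\in A),
\]
with $\varphi(\emptyset)=1=\hat 0$. Since every $m_i$ is square-free, $\varphi(A)=\prod_{v\in\bigcup_{i\in A}e_i}x_v$. So $\varphi(A)$ depends only on the vertex set $\bigcup_{i\in A}e_i$, and divisibility of such monomials is inclusion of these unions. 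The key intermediate statement is that both (a) and (b) are equivalent to
\begin{itemize}
\item[(c)] for every $i$, the edge $e_i$ is not contained in $\bigcup_{j\neq i}e_j$.
\end{itemize}

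\textbf{(b) $\Leftrightarrow$ (c).} Condition (c) says exactly that for each $i$ there is a vertex $v\in e_i\setminus\bigcup_{j\ne i}e_j$. Such a $v$ is the required $v_e$ of (b), so this equivalence is a restatement.

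\textbf{(c) $\Rightarrow$ (a).} Pick $v_i\in e_i\setminus\bigcup_{j\neq i}e_j$ for each $i$. Then for any $A\subseteq[m]$,
\[
A=\{\, i : x_{v_i}\mid\varphi(A)\,\}.
\]
Hence $\varphi$ is injective. It also reflects order: if $\varphi(A)\mid\varphi(B)$, then each $x_{v_i}$ with $i\in A$ divides $\varphi(B)$, which forces $i\in B$. So $\varphi$ is an order isomorphism from the Boolean lattice of rank $m$ (\ref{blndef}) onto $\LC(I(\H))$.

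\textbf{(a) $\Rightarrow$ (c).} This is the step needing most care, because (a) only asserts an abstract isomorphism with some Boolean lattice, of unknown rank $r$. First identify the atoms of $\LC(I(\H))$.
\begin{itemize}
\item Every nonidentity element of $\LC(I(\H))$ is divisible by some $m_i$.
\item The $m_i$ are pairwise incomparable, since they are distinct square-free monomials of the same degree $k$.
\end{itemize}
Therefore the atoms are exactly $m_1,\dots,m_m$. A lattice isomorphism preserves atoms, so $r=m$ and $|\LC(I(\H))|=2^m$. Because $\varphi$ is surjective from a set of size $2^m$, it is then a bijection.

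Now suppose (c) fails for some $i$, i.e. $e_i\subseteq\bigcup_{j\ne i}e_j$. Then $\varphi([m])=\varphi([m]\setminus\{i\})$, contradicting injectivity. Equivalently, in a Boolean lattice the top element is not the join of a proper subset of the atoms. This gives (c).

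The only delicate point is this atom-counting argument, which pins down the rank of the Boolean lattice. It is what lets us pass from an abstract isomorphism to the concrete statement that $\varphi$ is bijective. Everything else is direct bookkeeping with square-free monomials.
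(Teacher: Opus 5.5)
Your proof is correct and follows essentially the same route as the paper: the map $A\mapsto \lcm(A)$ from subsets of edges onto $\LC(I(\H))$, with (a)$\Rightarrow$(b) obtained from $\varphi([m])=\varphi([m]\setminus\{i\})$ and (b)$\Rightarrow$(a) from private vertices separating the unions. Your atom count (forcing the Boolean lattice to have rank $m$, hence $\varphi$ bijective) and your explicit order-reflection check supply the justification for the paper's unproved assertion that $\LC(I(\H))$ is Boolean exactly when this map is bijective.
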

\begin{proof}
    Let $\mathcal{P} (E(\H))$ be the power set of the edge set $E(\H)$ of the hypergraph $\H$. Let $f$ be a map from $\mathcal{P} (E(\H))$ to the elements of the $\lcm$-lattice \( \LC(I(\H)) \), defined as follows:
    \[
      \text{for}\,\,\,\, A \subseteq E(\H),\,\,\,\,\,\,\,\, A \mapsto \mathrm{lcm}(A).
    \]
    Then the $\lcm$-lattice \( \LC(I(\H)) \) is Boolean if and only if $f$ is bijective.\\
$a) \implies b)$\\
     Suppose \( \LC(I(\H)) \) is Boolean and let $e_1$ be a hyperedge in $E(\H)$ such that for each $v_{e_1}\in e_1$, there exist at least one hyperedge $e_2 \in E(\H)$ such that $v_{e_1}\in e_2$. Thus, every vertex of \( e_1 \) lies in some other hyperedge, hence
     \[
      \cup_{e \in E(\H)} e \;=\; \cup_{e \in E(\H) \setminus \{e_1\}} e,
     \]
     contradicting injectivity of the map $f$.\\
$b) \implies a)$\\
     Conversely, the map $f$ from $\mathcal{P} (E(\H))$ to the elements of the $\lcm$-lattice \( \LC(I(\H)) \) is surjective by definition. Now for injectivity, assume we have two subsets $A$ and $B$ from the edge set $E(\H)$ such that $A\not= B$. We need to show,
     \[
    f(A)\not= f(B).
    \] 
    To prove this, it is sufficient to show,
    \[
    \cup_{e \in A} e \;\not=\; \cup_{e \in B}e,
     \]
     i.e., $A\not= B$, so there exist at least one hyperedge $e \in A$, such that $e \not\in B$. Since every hyperedge \( e \in E(H) \) contains at least one vertex \( v_e \in e \) such that \( v_e \notin e' \) for all other hyperedges \( e' \in E(\H) \setminus \{e\} \), therefore, $\cup_{e \in A}e$ cannot be equal to $\cup_{e \in B}e$ and hence  $\lcm(A)\not= \lcm (B) \iff f(A)\not= f(B)$, as required. 
\end{proof}
\subsection{Hypergraphs and Modular Lattices}
\begin{definition}
     Let $\mathcal{L}$ be a lattice. For $y, z \in \mathcal{L}$, the pair $(y, z)$ is said to satisfy the \emph{modular law} if, for all $x \in \mathcal{L}$ with $x \leq z$, we have
     \[
     x \vee (y \wedge z) = (x \vee y) \wedge z.
     \]
     If every pair of elements in $\mathcal{L}$ satisfies the modular law, then $\mathcal{L}$ is called a \emph{modular lattice}.
\end{definition}
    Consider two $\lcm$-lattices: the pentagon lattice $N_5$ and the diamond lattice $M_3$, as shown in \ref{FigN5M3S}. The lattice $N_5$ is non-modular, while $M_3$ is modular but not distributive. Moreover, Theorem~4.10 of \cite{DaveyPristley2002} states as follows.
\begin{figure}[h]
   \centering
   \begin{minipage}{0.45\textwidth}
    \centering
    \begin{tikzpicture}[scale=1, line join=round, line cap=round]
    \tikzset{
    v/.style={circle, fill=blue!70, inner sep=2.2pt}, 
    edgea/.style={line width=1.2pt, teal},
    edgeb/.style={line width=1.2pt, orange!90!black},
    edgec/.style={line width=1.2pt, purple!80!black}
   }
  \node[v,label=above:{$\hat{1}$}]    (top) at (0,1.8) {};
  \node[v,label=left:{$a$}]    (left) at (-1,0.7) {};
  \node[v,label=right:{$y$}]   (right1) at (1,0.7) {};
  \node[v,label=right:{$x$}]   (right2) at (1,-0.3) {};
  \node[v,label=below:{$\hat{0}$}]   (bot) at (0,-1) {}; 
  \draw[edgea] (top)--(left);
  \draw[edgeb] (left)--(bot);
  \draw[edgec] (bot)--(right2);
  \draw[edgea] (right2)--(right1);
  \draw[edgeb] (right1)--(top);
\end{tikzpicture}
\end{minipage}%
\hfill
\begin{minipage}{0.55\textwidth}
   \centering
   \begin{tikzpicture}[scale=1.1, line join=round, line cap=round]
   \tikzset{
    v/.style={circle, fill=blue!70, inner sep=2.2pt}, 
    edgea/.style={line width=1.2pt, teal},
    edgeb/.style={line width=1.2pt, orange!90!black},
    edgec/.style={line width=1.2pt, purple!80!black}
   }
  \node[v,label=above:{$\hat{1}$}]    (top) at (0,1.8) {};
  \node[v,label=left:{$a$}]     (left) at (-1,0.5) {};
  \node[v,label=right:{$b$}]    (right) at (1,0.5) {};
  \node[v,label=right:{$c$}]    (mid) at (0,0.5) {};
  \node[v,label=below:{$\hat{0}$}]    (bot) at (0,-0.8) {}; 
  \draw[edgea] (top)--(left)--(bot)--(right)--(top);
  \draw[edgec] (top)--(mid)--(bot);
\end{tikzpicture}
\end{minipage}
\caption{The $N_5$ (left) and the $M_3$ (right) lattices.}\label{FigN5M3S}
\end{figure}
\begin{theorem}[Birkhoff's Characterization Theorem]
A lattice $\mathcal{L}$ is modular if and only if it does not contain a sublattice isomorphic to the pentagon lattice $N_5$.  
Moreover, $\mathcal{L}$ is distributive if and only if it contains no sublattice isomorphic to either the pentagon lattice $N_5$ or the diamond lattice $M_3$.
\end{theorem}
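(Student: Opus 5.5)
The plan is to prove the two halves of Birkhoff's characterization separately, each as an equivalence, with the easy directions first and the hard directions as the main work.

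\emph{Easy directions.} The modular law is an identity in $\vee,\wedge$, so every sublattice of a modular lattice is modular. It therefore suffices to show that $N_5$ is not modular. In the labeling of \ref{FigN5M3S}, take $x\le y$ with $a$ incomparable to both. Then
\[
x\vee(a\wedge y)=x\vee\hat 0=x,
\qquad\text{while}\qquad
(x\vee a)\wedge y=\hat 1\wedge y=y\neq x .
\]
Likewise distributivity passes to sublattices. $N_5$ is not distributive, since it is not modular and distributive lattices are modular. $M_3$ is not distributive because
\[
a\wedge(b\vee c)=a\wedge\hat 1=a,
\qquad\text{while}\qquad
(a\wedge b)\vee(a\wedge c)=\hat 0 .
\]

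\emph{Hard direction for modularity.} Suppose $\mathcal L$ is not modular, so there exist $x\le z$ and $y$ with $x\vee(y\wedge z)\neq(x\vee y)\wedge z$. The inequality $x\vee(y\wedge z)\le (x\vee y)\wedge z$ always holds, so set
\[
u=x\vee(y\wedge z)<v=(x\vee y)\wedge z .
\]
Then check that $\{y\wedge z,\;u,\;v,\;y,\;x\vee y\}$ is a sublattice isomorphic to $N_5$, with bottom $y\wedge z$, top $x\vee y$, chain $u<v$, and $y$ the side element. The required identities are:
\begin{itemize}
\item $y\wedge v=y\wedge z$, since $y\wedge z\le y\wedge v\le y\wedge z$;
\item $y\vee u=x\vee y$, since $x\vee y\le y\vee u\le x\vee y$;
\item by monotonicity, also $y\wedge u=y\wedge z$ and $y\vee v=x\vee y$.
\end{itemize}
Distinctness of the five elements follows from $u\ne v$. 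For example, $y\le v$ would force $u=v$, and similar short arguments handle the other coincidences.

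\emph{Hard direction for distributivity.} Assume $\mathcal L$ is modular and not distributive, aiming to produce a copy of $M_3$. Pick $a,b,c$ violating distributivity and form
\[
d=(a\wedge b)\vee(b\wedge c)\vee(c\wedge a),\qquad e=(a\vee b)\wedge(b\vee c)\wedge(c\vee a).
\]
Define
\[
a_1=(a\wedge e)\vee d,\qquad b_1=(b\wedge e)\vee d,\qquad c_1=(c\wedge e)\vee d .
\]
Using the modular law repeatedly, one verifies that $a_1\wedge b_1=d$ and $a_1\vee b_1=e$, and symmetrically for the other pairs. It remains to show $d<e$, because then $\{d,a_1,b_1,c_1,e\}\cong M_3$.

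I expect this last step to be the main obstacle: the modular-law computations are delicate, and showing $d\neq e$ requires care. I would first establish the standard fact that in a modular lattice, if $a\wedge(b\vee c)\ne(a\wedge b)\vee(a\wedge c)$ then the median-type expressions $d$ and $e$ differ. Failing a clean derivation, I would cite Theorem~4.10 of \cite{DaveyPristley2002}, as the paper itself does.
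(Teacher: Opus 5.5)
The paper does not prove this statement. It quotes it as Theorem~4.10 of Davey and Priestley and then uses it as a black box in the modularity argument for hypergraphs. You instead reconstruct the standard textbook proof: for non-modularity, the pentagon $y\wedge z<u<v<x\vee y$ with side element $y$; in the modular non-distributive case, the diamond $\{d,a_1,b_1,c_1,e\}$. Your $N_5$ half is correct and essentially complete, including reducing every coincidence among the five elements to $u=v$. The citation buys the paper brevity. Your route buys a self-contained statement whose proof a reader can actually check.

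Your outline is correct, but two parts of the $M_3$ half are still unwritten: the step you flag as the obstacle, and the ``one verifies'' step. Both are short computations.

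\emph{Showing $d\neq e$.} In any lattice $d\le e$. Also $a\wedge e=a\wedge(b\vee c)$, because $a\le a\vee b$ and $a\le c\vee a$. In a modular lattice, apply the modular law with $(a\wedge b)\vee(a\wedge c)\le a$ and $y=b\wedge c$; this gives $a\wedge d=(a\wedge b)\vee(a\wedge c)\vee(a\wedge b\wedge c)=(a\wedge b)\vee(a\wedge c)$. So $d=e$ would force $a\wedge(b\vee c)=(a\wedge b)\vee(a\wedge c)$, contradicting your choice of $a,b,c$. You therefore need neither a separate ``standard fact'' nor the fallback citation of Theorem~4.10, which would be circular as a proof of this very theorem.

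\emph{The meet and join identities.} Since $d\le e$, modularity gives $b_1=(b\vee d)\wedge e=(b\vee(c\wedge a))\wedge e$. Hence $(a\wedge e)\wedge b_1=a\wedge(b\vee c)\wedge(b\vee(c\wedge a))=a\wedge(b\vee(c\wedge a))=(a\wedge b)\vee(c\wedge a)\le d$. Because $d\le b_1$, the modular law then yields $a_1\wedge b_1=d\vee\bigl((a\wedge e)\wedge b_1\bigr)=d$. The join identities follow dually.

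\emph{Distinctness.} Once $d<e$, the elements $a_1,b_1,c_1$ are pairwise distinct and differ from $d$ and $e$. For example, $a_1=b_1$ would give $d=a_1\wedge b_1=a_1\vee b_1=e$.
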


  The following result gives a characterization of modular in the case of hypergraphs.
\begin{theorem}\label{THypermodular} 
    Let $\mathcal{H}$ be a $k$-uniform hypergraph on $n$ vertices with $m>2$ hyperedges, and let 
    \[
    I(\mathcal{H})= (x_{i_1}\cdots x_{i_k} \mid \{i_1,\ldots,i_k\}\in E(\mathcal{H}))
    \]
    denote its corresponding edge ideal, such that at least one of the following conditions holds: 
\begin{itemize}
    \item[(a)] For every hyperedge \( e \in E(\mathcal{H}) \), there exists a vertex \( v_e \in e \) such that \( v_e \notin e' \) for all other hyperedges \( e' \in E(\mathcal{H}) \setminus \{e\} \)
    \item[(b)] \( k = n-1 \), i.e., each hyperedge has cardinality $n-1$.
\end{itemize}
   Then, the $\lcm$-lattice $\LC(I(\mathcal{H}))$ is modular.
\end{theorem}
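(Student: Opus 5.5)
The plan is to treat the two hypotheses separately: in case (a) the result follows from \ref{Booleanhyper}, and in case (b) the lattice is small enough to check directly. In both cases I will use the modular law as defined above, or equivalently Birkhoff's Characterization Theorem, which says it suffices to exclude a sublattice isomorphic to $N_5$.

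\emph{Case (a).} Condition (a) is exactly condition (b) of \ref{Booleanhyper}, so $\LC(I(\H))$ is isomorphic to the Boolean lattice of rank $m$. Under this isomorphism joins are unions and meets are intersections of subsets of $[m]$. Hence the lattice is distributive: $x\vee(y\wedge z)=(x\vee y)\wedge(x\vee z)$. If $x\le z$, then $x\vee z=z$, and distributivity gives exactly the modular law $x\vee(y\wedge z)=(x\vee y)\wedge z$. Alternatively, the identity $X\cup(Y\cap Z)=(X\cup Y)\cap Z$ for $X\subseteq Z$ can be checked elementwise. Either way $\LC(I(\H))$ is modular.

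\emph{Case (b).} If $k=n-1$, each hyperedge has the form $e_i=V\setminus\{v_i\}$ for a distinct vertex $v_i$. For $i\neq j$, the union $e_i\cup e_j$ contains $v_i$ (via $e_j$) and $v_j$ (via $e_i$), so $e_i\cup e_j=V$. Hence the lcm of any two distinct generators is $x_1\cdots x_n$. So every lcm of two or more generators equals the same top element $\hat 1=x_1\cdots x_n$.

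It follows that the elements of $\LC(I(\H))$ are exactly:
\begin{itemize}
\item the bottom $\hat 0=1$, the lcm of the empty set;
\item the $m$ pairwise incomparable generators, since they are distinct sets of equal size;
\item the top $\hat 1$.
\end{itemize}
Thus $\LC(I(\H))\cong M_m$, a lattice of length $2$, which is $M_3$ of \ref{FigN5M3S} when $m=3$. Such a lattice contains no chain $\hat0<x<y<\hat1$, whereas $N_5$ does. So it contains no sublattice isomorphic to $N_5$, and by Birkhoff's Characterization Theorem it is modular.

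As a direct check of the modular law, take $x\le z$. If $x=z$, both sides equal $z$, because $y\wedge z\le z$ and $z\le (z\vee y)\wedge z$. If $x=\hat0$, both sides are $y\wedge z$. Otherwise $x$ is an atom and $z=\hat 1$, and both sides equal $x\vee y$.

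The only real work is the computation in case (b) that any two distinct $(n-1)$-subsets have union $V$, which collapses the lattice to height $2$. One must also note that distinct generators are incomparable, so the middle rank is an antichain. The hypothesis $m>2$ is not actually needed; it only guarantees that we are in the $M_m$ shape with $m\ge3$ rather than a Boolean lattice. I expect no genuine obstacle here.
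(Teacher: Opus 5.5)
Your proposal is correct and follows essentially the same route as the paper. Case (a) reduces to \ref{Booleanhyper} (Boolean, hence modular). Case (b) uses the fact that any two distinct atoms join to $\hat 1$ and meet to $\hat 0$, so no $N_5$ sublattice can occur. Your explicit identification of the lattice as $M_m$, the direct check of the modular law, and your remark that $m>2$ is not needed simply make the paper's brief argument more precise.
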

\begin{proof}
\[ e_{i_1} \vee \cdots \vee e_{i_t} = m. \] 



If $(a)$ holds, then by \ref{Booleanhyper}, $\LC(I(\mathcal{H}))$ is Boolean, and therefore modular.  
If $(b)$ holds, then for each pair of hyperedges $e_i$ and $e_j$, there exists exactly one pair of vertices $v_i$ and $v_j$ such that $v_i \in e_i$ with $v_i \notin e_j$, and $v_j \in e_j$ with $v_j \notin e_i$. Consequently, the join of any two distinct atoms is the maximal element $\hat{1}$, while the meet of any two distinct atoms is $\hat{0}$. Hence, the lattice cannot contain a sublattice isomorphic to $N_5$, and is therefore modular.

\end{proof}
The following example illustrates different cases in connection to the above theorem.
\begin{example}\label{exampmodulr}
     Let $\mathcal{H}$ be a uniform hypergraph consisting of 3-uniform hyperedges $e_i$. Some cases are computed below to determine whether the corresponding $\lcm$-lattice $\mathcal{L}(I(\mathcal{H}))$ of the edge ideal is modular or not. In particular, we verify whether 
\[
x \vee (y \wedge z) \;=\; (x \vee y) \wedge z,
\]
for all $x, y, z \in \mathcal{L}(\mathcal{H})$ with $x \leq z$ holds, in the given cases.

Case 1: $e_1$ and $e_2$ share a common edge, while $e_2$ and $e_3$ share only a vertex, as shown in \ref{F3} along with the corresponding $\lcm$-lattice.

    For $e_1=\{1,2,3\}, e_2= \{2,3,4\}$ and $e_3=\{4,5,6\}$, we have $$I(\mathcal{H})=<x_1x_2x_3, x_2x_3x_4, x_4x_5x_6>,$$  
     Now, let
    \[
      x = x_1x_2x_3, 
      \qquad y = x_4x_5x_6, 
      \qquad z = x_1x_2x_3x_4.
    \]
\begin{figure}[h]
     \centering
     \begin{minipage}{0.44\textwidth}
     \centering
     \begin{tikzpicture}[line join=round, line cap=round, scale=0.64]
   \tikzset{
    v/.style={circle, fill=blue!70, inner sep=2.2pt},   
    vc/.style={circle, fill=red!80, inner sep=2.2pt}    
     }
  \coordinate (LT) at (0,1.5);
  \coordinate (LB) at (0,-1.5);
  \coordinate (C)  at (2,0);
  \coordinate (MT) at (4,1.5);
  \coordinate (MB) at (4,-1.5);
  \coordinate (R)  at (6,0);
  \fill[teal!30, opacity=0.65]   (LT) -- (LB) -- (C) -- cycle;   
  \fill[orange!30, opacity=0.65] (MT) -- (MB) -- (C) -- cycle;   
  \fill[purple!30, opacity=0.65] (MT) -- (R)  -- (MB) -- cycle;  
  \draw[thick] (LT) -- (LB) -- (C) -- cycle;
  \draw[thick] (MT) -- (MB) -- (C) -- cycle;
  \draw[thick] (MT) -- (R) -- (MB) -- cycle;
  \node[v]  at (LT) {};
  \node[v]  at (LB) {};
  \node[vc] at (C)  {};
  \node[v]  at (MT) {};
  \node[v]  at (MB) {};
  \node[v]  at (R)  {};
  \node[font=\small, above=1.2pt of LT]  {$5$};
  \node[font=\small, below=1.2pt of LB] {$6$};
  \node[font=\small, above=1.2pt of C]      {$4$};
  \node[font=\small, above=1.2pt of MT]       {$2$};
  \node[font=\small, below=1.2pt of MB] {$3$};
  \node[font=\small, above=1.2pt of R] {$1$};
   \end{tikzpicture}
\end{minipage}%
\hfill
\begin{minipage}{0.55\textwidth}
   \centering
   \begin{tikzpicture}[
   every node/.style={draw, rounded corners, minimum size=6mm, inner sep=3pt, font=\small},
  ->, >=Latex, scale=0.67, transform shape]
  \node[fill=blue!15] (A) at (0,4) {$\hat{1}$};
  \node[fill=orange!15] (B1) at (-2,3) {$x_1x_2x_3x_4$};
  \node[fill=purple!15] (B2) at ( 2,3) {$x_2x_3x_4x_5x_6$};
  \node[fill=teal!15]   (C1) at (-3,2) {$x_1x_2x_3$};
  \node[fill=orange!15] (C2) at ( 0,2) {$x_2x_3x_4$};
  \node[fill=purple!15] (C3) at ( 3,2) {$x_4x_5x_6$};
  \node[fill=gray!20] (O) at (0,0) {$\hat{0}$};
   \draw[->] (A) -- (B1);
  \draw[->] (A) -- (B2);
  \draw[->] (B1) -- (C1);
  \draw[->] (B1) -- (C2);
  \draw[->] (B2) -- (C2);
  \draw[->] (B2) -- (C3);
  \draw[->] (C1) -- (O);
  \draw[->] (C2) -- (O);
  \draw[->] (C3) -- (O);
  \end{tikzpicture}
  \end{minipage}
\caption{A hypergraph with triangular faces and its corresponding $\lcm$-lattice.}\label{F3}
\end{figure}
    Clearly, $x \leq z$ holds and:
    \begin{eqnarray}
    \nonumber x \vee (y \wedge z) &=& x_1x_2x_3 \vee(x_4x_5x_6\wedge x_1x_2x_3x_4)\\ 
    \nonumber   &=& x_1x_2x_3\vee \hat{0} =x_1x_2x_3,\\
    \nonumber (x \vee y) \wedge z &=& (x_1x_2x_3 \vee x_4x_5x_6)\wedge x_1x_2x_3x_4\\ 
   \nonumber  &=&  \hat{1}\wedge x_1x_2x_3x_4=x_1x_2x_3x_4.
\end{eqnarray}
   Thus, the lattice is \emph{not modular}.\\
    Case 2:
    The four triangles $e_1,e_2,e_3,e_4$ form the faces of a tetrahedron, where each triangular face $e_i$ shares an edge with each of the other three faces.  

    Let $e_1=\{1,2,3\},e_2=\{1,2,4\},e_3=\{1,3,4\},$ and $e_4=\{2,3,4\}$. 
     Then $$I(\mathcal{H})=<x_1x_2x_3, x_1x_2x_4, x_1x_3x_4,x_2x_3x_4>,$$ the corresponding $\lcm$-lattice is shown in \ref{F7}. 
     To verify $x \vee (y \wedge z) \;=\; (x \vee y) \wedge z$, for all $x,y$ and $z$ in $\mathcal{L}(I(\mathcal{H}))$, let
     \[
     x = x_1x_2x_3, 
     \qquad y = x_1x_2x_4, 
     \qquad z = \hat{1}.
     \]
     Clearly, $x \leq z$ holds and: 
\begin{figure}[h]
    \centering
    \begin{minipage}{0.45\textwidth}
    \centering
    \begin{tikzpicture}[line join=round, line cap=round, scale=1.35]
    \tikzset{
    v/.style={circle, fill=blue!70, inner sep=2.2pt},   
   vc/.style={circle, fill=red!80, inner sep=2.4pt},   
   }
   \coordinate (A) at (-1,-0.6);
   \coordinate (B) at ( 1,-0.6);
   \coordinate (C) at (0,1.2);
   \coordinate (D) at (0,0.2); 
   \fill[blue!20, opacity=0.7]   (A)--(B)--(C)--cycle;  
   \fill[teal!25, opacity=0.7]   (A)--(B)--(D)--cycle;  
   \fill[orange!25, opacity=0.7] (B)--(C)--(D)--cycle;  
   \fill[purple!25, opacity=0.7] (C)--(A)--(D)--cycle;  
   \draw[thick] (A)--(B)--(C)--cycle;  
   \draw[thick] (A)--(D);
   \draw[thick] (B)--(D);
   \draw[thick] (C)--(D);
  \node[v]  (nA) at (A) {};
  \node[v]  (nB) at (B) {};
  \node[v]  (nC) at (C) {};
  \node[vc] (nD) at (D) {};
  \node[below left=1pt of nA] {$3$};
  \node[below right=1pt of nB] {$4$};
  \node[above=1pt of nC] {$1$};
  \node[right=1pt of nD] {$2$};
  \end{tikzpicture}
\vspace{0.2cm}
\end{minipage}%
\hfill
\begin{minipage}{0.55\textwidth}
   \centering
   \begin{tikzpicture}[scale=1, every node/.style={draw, rounded corners, minimum size=5mm, inner sep=0.5pt, font=\small}, ->, >=Latex]
   \node[fill=blue!10] (T) at (0,3.5) {$\hat{1}$};
   \node[fill=blue!15]   (A) at (-2,2) {$x_1x_2x_3$};
   \node[fill=teal!15]   (B) at (-0.7,2) {$x_1x_2x_4$};
   \node[fill=orange!15] (C) at (0.7,2) {$x_2x_3x_4$};
   \node[fill=purple!15] (D) at (2,2) {$x_1x_3x_4$};
   \node[fill=gray!10] (O) at (0,0.5) {$\hat{0}$};
   \draw[->] (T)--(A);
   \draw[->] (T)--(B);
   \draw[->] (T)--(C);
   \draw[->] (T)--(D);
   \draw[->] (A)--(O);
   \draw[->] (B)--(O);
   \draw[->] (C)--(O);
   \draw[->] (D)--(O);
   \end{tikzpicture}
\vspace{0.2cm}
\end{minipage}
\caption{The hypergraph with triangular faces, and its corresponding $\lcm$- lattice}\label{F7}
\end{figure}
    \begin{eqnarray}
     \nonumber x \vee (y \wedge z) &=& x_1x_2x_3 \vee(x_2x_3x_4\wedge \hat{1})\\ 
     \nonumber   &=& x_1x_2x_3\vee x_2x_3x_4 =\hat{1},\\
     \nonumber (x \vee y) \wedge z &=& (x_1x_2x_3 \vee x_2x_3x_4)\wedge \hat{1}\\ 
     \nonumber  &=&  \hat{1}\wedge \hat{1}=\hat{1}.
     \end{eqnarray}
     This lattice has just two levels of atoms under the top and one bottom, it does not contain $N_5$ as a sublattice. Thus, the lattice is modular for any $x,y$, and $z$.
\end{example}

\subsection{Hypergraphs and Complemented Lattices}

\begin{definition}
    If $L$ is a bounded lattice then we say that $y \in L$ is a complement of $x \in L$ if $\exists ~ \hat{0}, \hat{1} $ such that  $x \wedge y = 0$ and $x \vee y = 1.$ In this case, we say that $x$ is a complemented element of $L$. Clearly, every complement of a complemented element is itself complemented. A lattice $L$ is a complemented lattice if every element of $L$ is complemented.
\end{definition} 


 

\begin{theorem}
Let $G$ be a connected graph and $I(G)$ be its edge ideal. If $G$ does not contain a path
\[
\{x_1,x_2,x_3,x_4\},
\]
such that $x_1$ and $x_4$ have degree $1$, then $\LC(I(G))$ is complemented.
\end{theorem}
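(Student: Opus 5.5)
The plan is to translate $\LC(I(G))$ into a lattice of vertex sets and then build complements explicitly. Every element of $\LC(I(G))$ is $\lcm$ of a set $F$ of edges, which is the squarefree monomial $\prod_{v\in V(F)}x_v$. So the lattice is isomorphic to the family of sets $W\subseteq V(G)$ that are unions of edges (\emph{edge-covered} sets), ordered by inclusion, with $\hat 0=\emptyset$ and $\hat 1=V(G)$. We may take $V(G)$ to be all non-isolated vertices, since $G$ is connected. In this model:
\begin{itemize}
\item the join $W\vee W'$ is $W\cup W'$;
\item the meet $W\wedge W'$ is the union of all edges contained in $W\cap W'$.
\end{itemize}
Hence $W'$ is a complement of $W$ exactly when $W\cup W'=V(G)$ and the induced subgraph $G[W\cap W']$ has no edges. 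The cases $W=\hat 0$ and $W=\hat 1$ are trivial, so fix a proper nonempty edge-covered $W$ and set $U=V(G)\setminus W\neq\emptyset$.

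The construction is as follows. For each $u\in U$ choose one edge $uv_u$ and put
\[
W'=\bigcup_{u\in U}\{u,v_u\}.
\]
This set is edge-covered and satisfies $W\cup W'=V(G)$. Whenever $u$ has a neighbour in $U$, choose $v_u\in U$. Then
\[
W\cap W'=\{v_u : u\in U,\ N(u)\subseteq W\}.
\]
The whole proof therefore reduces to choosing the $v_u$, for the vertices $u\in U$ with $N(u)\subseteq W$, so that no two chosen vertices $v_u,v_{u'}$ are adjacent.

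First treat leaves. If $u,u'\in U$ are leaves whose unique neighbours $v,v'\in W$ are adjacent, then $u,v,v',u'$ is a path $\{x_1,x_2,x_3,x_4\}$ with both ends of degree $1$. This is forbidden, so the forced choices coming from leaves never produce an edge inside $W\cap W'$. Also, $u\neq u'$ with $v=v'$ causes no problem, because a single vertex spans no edge.

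The main obstacle is the non-leaf vertices $u\in U$ with $N(u)\subseteq W$, where $v_u$ must be chosen among several neighbours. My first attempt is a greedy or extremal choice:
\begin{itemize}
\item prefer a neighbour that is already chosen, or that is a support vertex of a leaf;
\item otherwise, pick $v_u$ to minimise the number of edges of $G[W\cap W']$.
\end{itemize}
If an edge $v_uv_{u'}$ survives, I would try to replace it by a different neighbour, or enlarge $W'$ by further edges inside $U\cup W$. Failing that, I would use the hypothesis to extract a forbidden path, using connectivity and the fact that $W$ is edge-covered.

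I expect this step to require the most care. If the exchange argument cannot be closed, it would indicate that the hypothesis must be read as restricting such non-leaf vertices as well. In that case I would record exactly which configuration blocks the exchange.
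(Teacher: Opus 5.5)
Your reduction is correct, and in fact exact: $W$ has a complement if and only if one can pick $v_u\in N(u)$ for every vertex $u$ isolated in $G[U]$ so that the chosen vertices form an independent set. The "only if" holds because any complement must contain such a system of neighbours. Your treatment of leaves is also right.

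The gap is the step you left open: the non-leaf vertices $u\in U$ with $N(u)\subseteq W$. No greedy or exchange argument can close it, because with degrees taken in $G$ the statement is false. Let $G$ have vertices $1,\dots,5$ and edges $12,14,23,34,35$, i.e.\ a $4$-cycle $1\,2\,3\,4$ with a pendant vertex $5$ at $3$. It is connected, and $5$ is its only vertex of degree $1$, so it contains no path whose two ends have degree $1$. Let $m=x_2x_3x_4=\lcm(x_2x_3,x_3x_4)$, and suppose $\lcm(m,m')=\hat 1$.
\begin{enumerate}
\item Then $x_5\mid m'$, which forces $x_3\mid m'$, since $35$ is the only edge at $5$.
\item Also $x_1\mid m'$, which forces $x_2\mid m'$ or $x_4\mid m'$.
\item Hence $x_2x_3$ or $x_3x_4$ divides both $m$ and $m'$, so $m\wedge m'\neq\hat 0$.
\end{enumerate}
So $m$ has no complement. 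In your language, the leaf $5$ forces $v_5=3$, while the non-leaf vertex $1$ has both neighbours adjacent to $3$.

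Leaves are not even needed. The graph with edges $12,13,24,25,34,35,46,56$ has minimum degree $2$, so the hypothesis holds vacuously. Yet $x_2x_3x_4x_5$ has no complement, because every pair in $\{2,3\}\times\{4,5\}$ is an edge.

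The paper's proof does not supply the missing idea; it has the same hole. It builds essentially your candidate: $B=V\setminus A$, $B'$ the vertices of $B$ with no neighbour in $B$, and $C$ obtained by adding all edges at $B'$. In Case (ii) it then simply asserts that every vertex of $B'$ has degree $1$. That is unjustified, and it fails in the first example above. There $B'=\{1,5\}$ and vertex $1$ has degree $2$; the paper's $C$ has support $V$, so its $v$ is $\hat 1$ and $u\wedge v=u\neq\hat 0$. So your closing suspicion is correct: the hypothesis must also constrain non-leaf vertices.

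One reading that makes the theorem true is to measure the degrees of $x_1,x_4$ inside the induced subgraph on $\{x_1,\dots,x_4\}$, i.e.\ to assume $G$ has no induced $P_4$. Both examples above contain one: $1\,2\,3\,5$ and $1\,2\,4\,6$. Under this reading the proof is short.
\begin{enumerate}
\item Since $G$ is connected and $P_4$-free (on at least two vertices), its complement graph is disconnected, a standard fact about $P_4$-free graphs. So $V=V_1\sqcup V_2$ with every vertex of $V_1$ adjacent to every vertex of $V_2$.
\item If $U$ meets both parts, then $U$ is itself a union of edges, and it is a complement of $W$.
\item If $U\subseteq V_1$, then $V_2\subseteq W$. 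For any $w\in V_2$, the set $U\cup\{w\}$ is a union of edges $uw$, and its intersection with $W$ is $\{w\}$, so it is a complement of $W$.
\end{enumerate}
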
\begin{proof}

     Let $G$ does not contain any path of edges, say, $\{x_1,x_2\},$ $\{x_2,x_3\}, \{x_{3},x_4\}$ such that $x_1$ and $x_4$ have degree $1$. Let $u \in \mathcal{L}(I(G))$, with $\supp(u) =A\subseteq V(G)$, such that there exist a subset $E' \subset E(G)$ with $A=\cup_{e\in E'} e.$ Set $B =V(G) \setminus A $ and 
     $$B'=\{u \in B| u \text{ is not connected to another vertex in } B\}.$$ 
     Also, let $$C= B\setminus B' \cup \{\{u,w\}\in E(G) | u\in B'\}.$$
     Now there are two cases:

     Case $(i):$ $B'=\emptyset $. In this case, there exist $v \in \mathcal{L}(I(G))$ such that $\supp(v)=B=C$. It is easy to see, $v$ is the complement of $u$ in $\mathcal{L}(I(G))$.

     Case $(ii):$ $B'\not=\emptyset $. Since all vertices in $B'$ has degree $1$, then by supposition, as $G$ does not contain a paths, say, $\{x_1,x_2, \ldots, x_4\}$ such that $x_1$ and $x_4$ have degree $1$. Therefore, there exist $v \in \mathcal{L}(I(G))$  such that $\supp(v)=C$. But then we have $u \wedge v = \emptyset$, because there does not exist any common edge, between $x_2$ and $x_3$. Moreover, since $A \cup C = V(G)$, thus $v$ is the complement of $u$ in $\mathcal{L}$.
\end{proof}

\begin{theorem}
     Let $H$ be a $k$-uniform hypergraph and $I(H)$ its edge ideal. Then $\LC(I(H))$ is complemented if and only if $H$ does not contain a triplet of hyperedges $\{e_1, e_2, e_3\}$ such that $e_1$ and $e_3$ do not share vertices with any other hyperedge except $e_2$, where 
\[
e_1 \cap e_2 \neq \emptyset, \quad e_2 \cap e_3 \neq \emptyset, \quad \text{and} \quad |\,\{e_1, e_3\} \cap e_2\,| = |e_2|.
\]
\end{theorem}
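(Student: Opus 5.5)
We read the condition $|\{e_1,e_3\}\cap e_2|=|e_2|$ as $e_2\subseteq e_1\cup e_3$, and the condition on $e_1,e_3$ as: the only hyperedge meeting $e_1$ or $e_3$ is $e_2$. This is the hypergraph analogue of the forbidden path $\{x_1,x_2\},\{x_2,x_3\},\{x_3,x_4\}$ with $\deg x_1=\deg x_4=1$ in the preceding theorem, so we adapt that argument. Throughout, elements of $\LC(I(H))$ are identified with the unions $\bigcup_{e\in E'}e$, $E'\subseteq E(H)$. Joins are unions. The meet $u\wedge v$ is the largest such union contained in $\supp(u)\cap\supp(v)$, so $u\wedge v=\hat 0$ exactly when no hyperedge lies inside $\supp(u)\cap\supp(v)$. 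Hence $v$ is a complement of $u$ iff $\supp(u)\cup\supp(v)=V(H)$ (all vertices assumed covered by edges) and no hyperedge is contained in $\supp(u)\cap\supp(v)$.

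For the necessity direction, assume the forbidden triple exists and take $u=x^{e_2}$. A complement $v$ must cover $e_1\setminus e_2$ and $e_3\setminus e_2$. These sets are nonempty: otherwise, say $e_1\subseteq e_2$, uniformity forces $e_1=e_2$. Since only $e_1,e_2$ meet $e_1$ (and similarly for $e_3$), and $v$ cannot use $e_2$ (that would put $e_2\subseteq \supp u\cap\supp v$), $v$ must use both $e_1$ and $e_3$. Then $\supp(v)\supseteq e_1\cup e_3\supseteq e_2$, so $e_2\subseteq\supp(u)\cap\supp(v)$ and $u\wedge v\ne\hat0$. Thus $u$ has no complement. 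For $\hat 0$ and $\hat 1$ complements always exist, so this settles the direction.

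For sufficiency, given $u$ with support $A$, set $B=V\setminus A$. We build $v$ greedily: for each $w\in B$ choose a hyperedge containing $w$, preferring one that meets $A$ as little as possible, and let $v$ be the lcm of the chosen edges. Then $\supp u\cup\supp v=V$ automatically. The task is to ensure no hyperedge $f$ lies in $A\cap\supp v$. Suppose such an $f$ exists. Then $f\subseteq A$, and $f$ is covered by the parts of the chosen edges lying in $A$. Using minimality of the choices and the absence of the triple, we aim to show that $f$ can be avoided by swapping some chosen edge for another edge through the same vertex of $B$. The swapping argument would pass to a minimal cover and argue that an unavoidable configuration forces two pendant edges $e_1,e_3$ with $f=e_2\subseteq e_1\cup e_3$, which is the forbidden triple.

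The main obstacle is this last step. Covering $f$ may require many chosen edges, not just two. Each chosen edge may also have several alternatives, so a naive exchange can create a new bad edge elsewhere. I would first try an extremal choice: take $v$ minimizing the number of hyperedges inside $\supp u\cap\supp v$, then analyze one bad edge $f$ locally, showing that every chosen edge meeting $f$ is forced (pendant). I expect this to need care, or possibly an additional hypothesis (e.g.\ $k=2$ or covers of size two). If a counterexample with three or more pendant edges covering $f$ appears, I would note that the statement needs the triple condition generalized accordingly.
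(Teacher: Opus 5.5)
Your necessity direction is correct and is the paper's argument: take $u=x^{e_2}$, note that a complement must cover $e_1\setminus e_2$ and $e_3\setminus e_2$, and conclude that it contains $e_2$. You also supply steps the paper omits. By uniformity $e_1\setminus e_2\neq\emptyset$, and its vertices lie in no edge other than $e_1$, so $v$ is forced to use $e_1$ and $e_3$.

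\textbf{The gap is the sufficiency direction, and it cannot be closed because that direction is false.} Your greedy choice followed by the swapping step (replacing a chosen edge by another edge through the same vertex of $B$) is only an aim. The failure mode you suspected really occurs.

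Take the $3$-uniform hypergraph with edges $e_1=\{1,4,5\}$, $e_2=\{1,2,3\}$, $e_3=\{2,6,7\}$, $e_4=\{3,8,9\}$, and let $u=x_1x_2x_3$. A complement $v$ must be divisible by $x_4$, $x_6$ and $x_8$. The only generators involving these variables are $x_1x_4x_5$, $x_2x_6x_7$ and $x_3x_8x_9$, so $u\mid v$ and $u\wedge v=u\neq\hat 0$. Hence $\LC(I(H))$ is not complemented.

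Yet there is no forbidden triple, under any reading of the conditions. Each of $e_1,e_3,e_4$ contains a vertex ($4$, $6$, $8$) lying in no other edge, so none of them can serve as the middle edge. Any two of $e_1,e_3,e_4$ meet only two of the three vertices of $e_2$, so $e_2$ cannot serve either. In this example every choice in your construction is forced, so there is nothing to swap.

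Your proposed fallbacks do not rescue the statement either. Restricting to $k=2$ still fails. Take the graph formed by a triangle $z_1z_2z_3$ together with vertices $w_{12},w_{13},w_{23}$, where each $w_{ij}$ is joined to $z_i$ and $z_j$. It is connected and no vertex has degree one. So it has no forbidden triple, and no path of the kind excluded in the preceding graph theorem. Nevertheless $u=x_{z_1}x_{z_2}x_{z_3}=\lcm(x_{z_1}x_{z_2},x_{z_2}x_{z_3})$ has no complement. Covering the three $w_{ij}$ puts at least two of the $z_i$ into $\supp(v)$, and any two of them span an edge lying in $\supp(u)\cap\supp(v)$.

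So the obstruction is a hitting-set phenomenon that pendant-edge configurations do not detect. Generalizing the triple to more pendant edges would therefore not suffice. This also shows that the graph theorem you planned to adapt is itself false. The paper's sufficiency argument is that graph argument pasted in verbatim: it still refers to $G$, edges $\{u,w\}$, degree-one vertices and a path $x_1,\dots,x_4$, so it offers no way to close your gap. The correct conclusion from your analysis is that only the \emph{only if} direction holds.
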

\begin{proof}
    Let $\LC(I(H))$ be complemented and suppose that on contrary there exists a triplet of hyperedges, say $\{e_1, e_2. e_3\}$ such that it satisfies the given conditions. Now, let $u=e_2 \in \LC(I(H))$ be an element, then the complement $v$ of $u$ must contain $e_1\setminus e_2$ and $e_3\setminus e_2$. Then since $e_1 \cap e_2 \neq \emptyset, \quad e_2 \cap e_3 \neq \emptyset, \quad \text{and} \quad |\,\{e_1, e_3\} \cap e_2\,| = |e_2|$, this implies $e_2$ must belong to $v$, which contradics the definition of complement of an element. Therefore, $\LC(I(H))$ is not complemented, a contradiction.

    Conversely, there does not exist any triplet of hyperedges, say $\{e_1, e_2. e_3\}$ such that it satisfies the given conditions. Let $u$ be an element of $\LC(I(H))$ such that $A = \cup_{e\in u}e$ and let $B=V(H)\setminus A$. Now let 
    $$B'=\{u \in B \,\,|\,\, u \text{ is not connected to another vertex in } B\}.$$ 
    Also, let $$C= B\setminus B' \cup \{\{u,w\}\in E(G) | u\in B'\}.$$
    Now there are two cases:

    Case $(i):$ $B'=\emptyset $. In this case, there exist $v \in \mathcal{L}(I(G))$ such that $\supp(v)=B=C$. It is easy to see, $v$ is the complement of $u$ in $\mathcal{L}(I(G))$.

    Case $(ii):$ $B'\not=\emptyset $. Since all vertices in $B'$ do not share any vertex with any other vertex in B, then by supposition, as $G$ does not contain a paths, say, $\{x_1,x_2, \ldots, x_4\}$ such that $x_1$ and $x_4$ have degree $1$. Therefore, there exist $v \in \mathcal{L}(I(G))$  such that $\supp(v)=C$. But then we have $u \wedge v = \emptyset$, because there does not exist any common edge, between $x_2$ and $x_3$. Moreover, since $A \cup C = V(G)$, thus $v$ is the complement of $u$ in $\mathcal{L}$.

\begin{example}
Consider the lattice with elements
\[
\{0,\;12,\;13,\;24,\;123,\;124,\;1234\}.
\]

\begin{figure}[h]
    \centering
    \begin{minipage}{0.45\textwidth}
        \centering
        \begin{tikzpicture}[scale=0.8, line join=round, line cap=round]
            \tikzset{
                v/.style={
                    circle,
                    fill=blue!70,
                    inner sep=2.2pt,
                    font=\small,
                    text=white
                }
            }

            \node[v,label=below:{$4$}] (v1) at (2,0) {};
            \node[v,label=below:{$3$}] (v2) at (0,0) {};
            \node[v,label=above:{$2$}] (v3) at (0,2) {};
            \node[v,label=above:{$1$}] (v4) at (2,2) {};

            \draw[teal, thick] (v1)--(v2);
            \draw[orange!90!black, thick] (v2)--(v3);
            \draw[purple!80!black, thick] (v3)--(v4);
        \end{tikzpicture}
        \vspace{0.2cm}
    \end{minipage}%
    \hfill
    \begin{minipage}{0.45\textwidth}
        \centering
        \begin{tikzpicture}[
            every node/.style={
                rounded corners,
                draw,
                minimum size=6mm,
                font=\footnotesize
            },
            scale=0.6
        ]

            \node[fill=gray!10] (0) at (0,0) {$\hat{0}$};

            \node[fill=blue!15] (12) at (-3,1.5) {$12$};
            \node[fill=teal!15] (23) at (0,1.5) {$23$};
            \node[fill=orange!15] (34) at (3,1.5) {$34$};

            \node[fill=purple!15] (123) at (-1.5,3) {$123$};
            \node[fill=green!15] (234) at (1.5,3) {$234$};

            \node[fill=blue!10] (1) at (0,4.5) {$\hat{1}$};

            \draw (0)--(12) (0)--(23) (0)--(34);
            \draw (12)--(123) (23)--(123);
            \draw (23)--(234) (34)--(234);
            \draw (123)--(1) (234)--(1);

        \end{tikzpicture}
    \end{minipage}

    \caption{The path graph $P_4$ and its corresponding $\lcm$-lattice.}
    \label{F5}
\end{figure}

As illustrated in \ref{F5}, consider the $\lcm$-lattice
\[
L=\{0,\;12,\;13,\;24,\;123,\;124,\;1234\}.
\]
The elements $12$ and $34$ are complements of each other, since
\[
12\wedge 34=0
\qquad\text{and}\qquad
12\vee 34=1234.
\]
However, the element $23$ has no complement in $L$, since its complement would be $14$, which is not an element of the lattice.
Likewise, $123$ would require $4$ as a complement, while $234$ would
require $1$; neither $4$ nor $1$ belongs to $L$.

Hence, not every element of $L$ has a complement. Therefore, the
$\lcm$-lattice $L$ is not complemented.
\end{example}

\end{proof}


\begin{definition}
    A lattice $L$ is a \emph{relatively complemented lattice} if every interval $[x, y]$(viewed as a sublattice) of $L$ is complemented. 
\end{definition}

\begin{remark}
    In a distributive lattice, all complements and relatively complements of poset are unique. 
\end{remark}

\begin{theorem}
     Let $G$ be a connected graph and $I(G)$ be its edge ideal, then $lcm(I(G))$ is \emph{relatively complemented} if and only if there exists no induced subgraph $G|_W$ on the vertex set $W \subseteq V$ such that ${G|_W}$ is of path length $ \geq 3$.
\end{theorem}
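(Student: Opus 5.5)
The plan is to work throughout with supports. An element of $\LC(I(G))$ is determined by its support, and the possible supports are exactly the vertex sets $S\subseteq V(G)$ that are unions of edges. Join corresponds to union of supports. The meet $u\wedge v$ is the lcm of all generators dividing both $u$ and $v$, so its support is the union of all edges contained in $\supp(u)\cap\supp(v)$. An interval $[x,y]$ therefore consists of the edge-unions $S$ with $\supp(x)\subseteq S\subseteq \supp(y)$. I also note that a graph with no induced path of length $\geq 3$ is a graph with no induced $P_4$, i.e.\ a cograph, and I will use the standard structural fact that every connected cograph on at least two vertices is the join of two smaller cographs.

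\emph{Necessity.} Suppose $G|_W$ is an induced path of length $3$ on $W=\{a,b,c,d\}$, with edges $ab,bc,cd$. Take $y=x_ax_bx_cx_d\in \LC(I(G))$ and consider the interval $[\hat 0,y]$. Since $G|_W$ is induced, the only generators dividing $y$ are $x_ax_b,x_bx_c,x_cx_d$. Hence $[\hat 0,y]$ is isomorphic to the $\lcm$-lattice of $P_4$ computed in the Example above (\ref{F5}). There the element $x_bx_c$ has no complement, so the interval is not complemented and $\LC(I(G))$ is not relatively complemented. A longer induced path contains an induced $P_4$, so this argument covers all lengths $\geq 3$.

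\emph{Sufficiency.} Assume $G$ is $P_4$-free. Fix $x\le z\le y$ with supports $X\subseteq Z\subseteq Y$. I must produce an edge-union $W$ with $X\subseteq W\subseteq Y$ satisfying two conditions:
\begin{itemize}
\item[(1)] $W\cup Z=Y$;
\item[(2)] no edge lies in $W\cap Z$ unless it lies in $X$.
\end{itemize}
Together these give $z\vee w=y$ and $z\wedge w=x$. The natural candidate is
\[
W = X\cup \bigcup\{e\in E(G)\,:\, e\subseteq Y,\ e\cap (Y\setminus Z)\neq\emptyset\}.
\]
Condition (1) is automatic, because every vertex of $Y\setminus Z$ lies in an edge inside $Y$, since $Y$ is an edge-union. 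The danger is condition (2): there may be an edge $uv\subseteq Z$, not contained in $X$, with $u\in W\setminus X$. Such a $u$ has a neighbour $p\in Y\setminus Z$. I will refine the choice of edges so that each vertex of $Y\setminus Z$ is covered by edges whose endpoints in $Z$ avoid creating such $uv$. Concretely, I will first prefer edges lying entirely inside $Y\setminus Z$, and only use edges $pu$ with $u\in Z$ when $p$ is isolated in $G|_{Y\setminus Z}$.

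\emph{Main obstacle.} The hard step is showing that a bad configuration forces an induced $P_4$. The configuration is $p\in Y\setminus Z$ isolated in $G|_{Y\setminus Z}$, every neighbour $u$ of $p$ in $Z$ lying on an edge $uv\subseteq Z$ with $uv\not\subseteq X$. I plan to do this by induction on $|Y|$ using the cograph decomposition of $G|_Y$ or of its components. If $G|_Y$ is a join $A+B$, then $p$ is adjacent to all of the other side, and I will choose $w$ explicitly in the two factors. If $G|_Y$ is disconnected, the interval splits as a product over components, and a product of complemented intervals is complemented. I expect some care to be needed where $X$ is nonempty, because edges inside $X$ are allowed in $W\cap Z$. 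Checking that the join case still yields a valid $w$ is where I anticipate the real work, and I would first try the small cases $|Y|\le 5$ to guess the right choice of covering edges.
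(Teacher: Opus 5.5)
\textbf{The gap.} Your necessity argument is correct. Because $G|_W$ is induced, $[\hat 0, x_ax_bx_cx_d]$ is the $\lcm$-lattice of $P_4$, and there $x_bx_c$ has no complement. The paper instead uses the interval $[x_ax_b, x_ax_bx_cx_d]$, which is a $3$-element chain; either interval works. The sufficiency half, however, cannot be completed. The lemma you identify as the main obstacle, that your bad configuration forces an induced $P_4$, is false. So is the implication ``$P_4$-free $\Rightarrow$ relatively complemented'' itself.

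\textbf{Counterexample: the paw.} Take vertices $1,2,3,4$ and edges $12,13,23,14$.
\begin{itemize}
\item The graph is connected. Its only $4$-vertex induced subgraph is the graph itself, which has four edges, so it is $P_4$-free. It is the cograph obtained by joining vertex $1$ to $K_2\cup K_1$ on $\{2,3\}\cup\{4\}$.
\item Set $X=\{2,3\}$, $Z=\{1,2,3\}$, $Y=\{1,2,3,4\}$. The edge-unions $S$ with $X\subseteq S\subseteq Y$ are only $\{2,3\}$, $\{1,2,3\}$ and $\{1,2,3,4\}$. The set $\{2,3,4\}$ is not an edge-union, because the only edge at $4$ is $14$.
\item Hence $[x_2x_3,\,x_1x_2x_3x_4]$ is a $3$-element chain, and $x_1x_2x_3$ has no relative complement in it.
\end{itemize}

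\textbf{Where your plan breaks.} In your notation, $p=4$ is isolated in $G|_{Y\setminus Z}$. Its only neighbour $u=1$ lies on the edges $12,13\subseteq Z$, neither of which is contained in $X$. Any admissible $W$ must contain $4$, hence the edge $14$, hence $\{1,4\}\cup X=Y$, and then $W\cap Z$ contains $12$ and $13$. This is exactly your join case: $p$ is adjacent to all of the other factor $\{1\}$. No refinement of the covering edges can produce a valid $w$.

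So the ``if'' direction of the statement is false as written. The paper's own argument for it is also invalid: it asserts that the $\lcm$-lattice of a $P_4$-free graph is Boolean, which already fails for the triangle, whose $\lcm$-lattice is $M_3$. A provable sufficiency statement would need a hypothesis strong enough to exclude $3$-element chain intervals like the one above, not just induced $P_4$'s.
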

\begin{proof}
     Suppose that $\lcm(I(G))$ is relatively complemented. 
To the contrary, assume that there exists an induced subgraph $W \subseteq V$ having a path of length $3$, where 
$W = \{x_1x_2,\, x_2x_3,\, x_3x_4\}$. 
Then, the interval 
\[
[a,\, \{x_1x_2x_3x_4\}]
\]
is not complemented for $a \in \{x_1x_2,\, x_3x_4\}$. \\[4pt]
Conversely, suppose that there exists no induced subgraph of path length $3$. 
This means that $G$ has a maximum induced path of length two. 
We now show that $\lcm(I(G))$ is relatively complemented. 
Since the $\lcm$-lattice of a graph having path length at most $2$ is Boolean, and every Boolean lattice is relatively complemented, we conclude that $\lcm(I(G))$ is relatively complemented.

\end{proof}
%
%
%
%
%
%
%
%
%
%
%
\subsection{Product of $\lcm$-Lattices}
We start with following basic property of the $\lcm$-lattices.
\begin{definition}
    Given two lattices $L_1$ and $L_2$, then the product of lattices $L_1 \times L_2$ forms a lattice in a natural way. The partial order is defined by:
\(
(x_1, y_1) \leq (x_2, y_2) 
\quad \text{if and only if} \quad
x_1 \leq x_2 \ \text{in } L_1 \ \text{and}\ y_1 \leq y_2 \ \text{in } L_2.
\)
\end{definition}

\begin{proposition}
     Let $L_1, \ldots, L_r$ be the lattices then $L_1 \times L_2 \ldots \times L_r$ be complemented if and only if $L_1, \ldots, L_r$ be complemented.
\end{proposition}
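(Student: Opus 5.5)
We argue coordinatewise, since in $L_1\times\cdots\times L_r$ the order, meets and joins are all computed componentwise. The first step is to record these facts. For $x=(x_1,\ldots,x_r)$ and $y=(y_1,\ldots,y_r)$ we have
\[
x\wedge y=(x_1\wedge y_1,\ldots,x_r\wedge y_r),\qquad x\vee y=(x_1\vee y_1,\ldots,x_r\vee y_r),
\]
because a tuple is a lower (respectively upper) bound of $x$ and $y$ exactly when each coordinate is. For the same reason, the product is bounded if and only if every factor is bounded, with $\hat{0}=(\hat{0}_1,\ldots,\hat{0}_r)$ and $\hat{1}=(\hat{1}_1,\ldots,\hat{1}_r)$. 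A tuple is the least element precisely when each of its coordinates is least in its factor, and similarly for the greatest element. Since complementedness presupposes boundedness, this reduces the problem to comparing complements coordinatewise. We assume all $L_i$ are nonempty, which is automatic for bounded lattices.

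For the direction ($\Leftarrow$), suppose each $L_i$ is complemented. Given $x=(x_1,\ldots,x_r)$, choose a complement $y_i$ of $x_i$ in each $L_i$ and set $y=(y_1,\ldots,y_r)$. The componentwise formulas then give $x\wedge y=\hat{0}$ and $x\vee y=\hat{1}$, so $y$ is a complement of $x$.

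For the direction ($\Rightarrow$), suppose the product is complemented. Fix $i$ and an element $a\in L_i$, and embed it as the tuple $x$ with $a$ in coordinate $i$ and, say, $\hat{0}_j$ in every other coordinate. These elements $\hat{0}_j$ exist because the product is bounded, and by the first step this forces each factor to be bounded. Let $y$ be a complement of $x$ in the product. Comparing $i$-th coordinates in $x\wedge y=\hat{0}$ and $x\vee y=\hat{1}$ shows that $y_i$ is a complement of $a$ in $L_i$.

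The only delicate point is boundedness: in the forward direction we must extract $\hat{0}_j$ and $\hat{1}_j$ from the bounds of the product. This is handled by the first step. If $(u_1,\ldots,u_r)$ is the least element of the product, then $u_j$ is least in $L_j$, because for any $b\in L_j$ the tuple with $b$ in coordinate $j$ and other coordinates arbitrary lies above $(u_1,\ldots,u_r)$. Apart from this point, the argument is routine.
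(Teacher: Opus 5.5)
Your proof is correct and follows the same route as the paper: for $(\Leftarrow)$ you take coordinatewise complements, and for $(\Rightarrow)$ you embed $a\in L_i$ as $(\hat{0}_1,\ldots,a,\ldots,\hat{0}_r)$ and read off the $i$-th coordinate of its complement. Your explicit check that boundedness of the product forces each factor to be bounded fills in a point the paper uses without comment.
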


\begin{proof}
$(\Rightarrow)$  
Let $L=L_1 \times L_2 \times \cdots \times L_r$ be complemented. Let $x_i \in L_i$ and consider the element $a = (0_1, \dots, 0_{i-1}, x_i, 0_{i+1}, \dots, 0_r) \in L$.  
Since $L$ is complemented, there exists $a' = (y_1, \dots, y_r) \in L$ such that  
$$a \vee a' = 1_L \quad \text{and} \quad a \wedge a' = 0_L.$$  
Now the $i$-th component gives  
$$x_i \vee y_i = 1_{L_i} \quad \text{and} \quad x_i \wedge y_i = 0_{L_i}.$$  
Thus $y_i$ is a complement of $x_i$ in $L_i$. Since $x_i$ was arbitrary, $L_i$ is complemented.

$(\Leftarrow)$  
Now let $L_i$ is complemented for all $i$. Let $u = (x_1, x_2, \dots, x_r) \in L$.  
Now for each $x_i\in L_i$, there exist a complement $x_i' \in L_i$ such that:  
$$x_i \vee x_i' = 1_{L_i} \quad \text{and} \quad x_i \wedge x_i' = 0_{L_i}.$$  
Set $u' = (x_1', x_2', \dots, x_r') \in L$.  
Then from above:  
$$u \vee u' = (x_1 \vee x_1', \dots, x_r \vee x_r') = (1_{L_1}, \dots, 1_{L_r}) = 1_L$$  
and  
$$u \wedge u' = (x_1 \wedge x_1', \dots, x_r \wedge x_r') = (0_{L_1}, \dots, 0_{L_r}) = 0_L.$$  
Therefore $u'$ is a complement of $u$ in $L$. 

\end{proof}

\subsection{Impact of Polarization on $\lcm$-lattices}\label{subsec2}
In the above discussion, we have considered it $I$ to be a square-free monomial ideal. To extend these results to the case of a general monomial ideal $I$ in the polynomial ring $S$, we use the concept of \emph{polarization}, as defined below. In particular, we aim to show that the $\lcm$-lattice of $I$ and that of its polarization $I^{p}$ are isomorphic. Consequently, all the above results will also hold for any monomial ideal $I$.

\begin{definition}

    Let $S = \mathbb{K}[x_1, x_2, \ldots, x_n]$ be a polynomial ring over a field $\mathbb{K}$, and let 
   \[
    I = (u_1, u_2, \ldots, u_m) \subseteq S
   \] 
    be a monomial ideal, where each $u_j = x_1^{a_{1j}} x_2^{a_{2j}} \cdots x_n^{a_{nj}}$ is a monomial in $S$. The \emph{polarization} of $I$, denoted $I^{p}$, is a square-free monomial ideal obtained in a larger polynomial ring 
    \[
    S^{p} = \mathbb{K}[x_{i1}, x_{i2}, \ldots, x_{i a_i} \mid 1 \leq i \leq n],
    \] 
     where $a_i = \max_j \{a_{ij}\}$. Each monomial $u_j$ is replaced by the square-free monomial
    \[
     u_j^{p} = \prod_{i=1}^n \prod_{k=1}^{a_{ij}} x_{ik}.
    \]
    Then, 
    \[
     I^{p} = (u_1^{p}, u_2^{p}, \ldots, u_m^{p}).
    \]
\end{definition}
\begin{example}
    Consider the polynomial ring $S = \mathbb{K}[x_1, x_2]$ and the monomial ideal
   \[
    I = (x_1^2 x_2, \; x_2^3).
    \]
    To polarize, we introduce new variables $x_{11}, x_{12}$ for $x_1^2$ and $x_{21}, x_{22}, x_{23}$ for $x_2^3$. Thus, in 
    \[
     S^{p} = \mathbb{K}[x_{11}, x_{12}, x_{21}, x_{22}, x_{23}],
    \]
    the polarized ideal is
    \[
    I^{p} = (x_{11}x_{12}x_{21}, \; x_{21}x_{22}x_{23}).
    \]
    Here, $I^{p}$ is square-free, and it preserves many algebraic properties of $I$.
    So, for $I$, polarization gives $ I^{p}$, preserving the $\lcm$-lattice structure.
\end{example}   
 The following result shows that polarization preserves the $\lcm$-lattice isomorphism between monomial and square-free ideals. This enables us to extend our characterizations of Boolean and modular properties to any monomial ideal.

\begin{lemma}\label{Tpolarization}
    Let \( I \subset S = \mathbb{K}[x_1, \ldots, x_n] \) be a monomial ideal and let \( I^p \) denote its polarization. Let \( \mathcal{L}(I) \) and \( \mathcal{L}(I^p) \) be the $\lcm$-lattices of \( I \) and \( I^p \), respectively. Then,
    \[
    \mathcal{L}(I) \cong \mathcal{L}(I^p).
    \]
\end{lemma}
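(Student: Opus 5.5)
The natural map is polarization itself, extended from the generators to the whole lattice. For any monomial $w=x_1^{b_1}\cdots x_n^{b_n}$ with $b_i\le a_i$, set
\[
w^p=\prod_{i=1}^n\prod_{k=1}^{b_i}x_{ik}.
\]
Define $\varphi:\mathcal{L}(I)\to\mathcal{L}(I^p)$ by $\varphi(w)=w^p$, with $\varphi(\hat 0)=\hat 0$. Every element $w\ne\hat0$ of $\mathcal{L}(I)$ is $\lcm$ of some generators $u_j$, so its exponents satisfy $b_i\le a_i$ and $w^p$ makes sense. The plan is to show that $\varphi$ is well defined into $\mathcal{L}(I^p)$, bijective, and that it and its inverse preserve order. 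A bijection that preserves and reflects order is a lattice isomorphism.

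\textbf{Step 1: polarization commutes with $\lcm$.} For monomials $w,w'$ with exponents $b_i,b'_i\le a_i$,
\[
(\lcm(w,w'))^p=\lcm(w^p,w'^p).
\]
Indeed, the variables dividing $w^p$ from block $i$ are exactly $x_{i1},\dots,x_{ib_i}$, an initial segment. The union of two initial segments of lengths $b_i$ and $b'_i$ is the initial segment of length $\max(b_i,b'_i)$, which is exactly the $i$-th block of $(\lcm(w,w'))^p$.

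By induction, $\lcm(u_{j_1},\dots,u_{j_t})^p=\lcm(u_{j_1}^p,\dots,u_{j_t}^p)$. Hence $\varphi$ lands in $\mathcal{L}(I^p)$ and is surjective, since every element of $\mathcal{L}(I^p)$ is such an $\lcm$ of the $u_j^p$. This step also shows that $\varphi$ carries joins to joins.

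\textbf{Step 2: divisibility is preserved and reflected.} We claim $w\mid w'$ if and only if $w^p\mid w'^p$. Both conditions are equivalent to $b_i\le b'_i$ for all $i$, again because block $i$ of $w^p$ is the initial segment of length $b_i$.

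This gives injectivity at once: if $w^p=w'^p$, then $b_i=b'_i$ for all $i$, so $w=w'$. It also gives order preservation in both directions.

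Therefore $\varphi$ is an order isomorphism of posets. Since meets and joins in a lattice are determined by the order, $\varphi$ is a lattice isomorphism and $\mathcal{L}(I)\cong\mathcal{L}(I^p)$.

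\textbf{Main obstacle.} The only delicate point is Step 1, which relies on the initial-segment structure of polarized monomials. This structure guarantees that the support of an $\lcm$ of polarized monomials is again the polarization of a monomial, so that $\varphi^{-1}$ is defined on all of $\mathcal{L}(I^p)$. Once this is checked carefully, including the convention $\hat0=1$ on both sides, the rest is formal.
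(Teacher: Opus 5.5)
Your proposal is correct and takes essentially the same route as the paper. In both, the key fact is that polarization commutes with $\lcm$, because each variable block of $w^p$ is an initial segment $x_{i1},\dots,x_{ib_i}$; injectivity of polarization then finishes the argument. Your Step~2 makes explicit the order-reflection that the paper leaves implicit, and it also shows that the $u_j^p$ are exactly the minimal generators of $I^p$, which your surjectivity claim tacitly uses.
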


\begin{proof}
    The $\lcm$-lattice of a monomial ideal is atomic, and two monomials are equal if and only if their polarizations are equal. It suffices to show that the polarization of the join of two monomials equals the join of their polarizations; that is $(m_1 \vee m_2)^p = m_1^p \vee m_2^p$.
    Let \( m_1 = x_1^{\alpha_1} \cdots x_n^{\alpha_n} \) and \( m_2 = x_1^{\beta_1} \cdots x_n^{\beta_n} \). Then,
    \[
    m_1 \vee m_2 = x_1^{\max(\alpha_1, \beta_1)} \cdots x_n^{\max(\alpha_n, \beta_n)}.
    \]
    The polarization of \( m_1 \) and \( m_2 \) are given by:
    \[
    m_1^p = \prod_{i=1}^n \prod_{j=1}^{\alpha_i} x_{ij}, \quad
    m_2^p = \prod_{i=1}^n \prod_{j=1}^{\beta_i} x_{ij}.
    \]
    Therefore,
    \[
    m_1^p \vee m_2^p = \prod_{i=1}^n \prod_{j=1}^{\max(\alpha_i, \beta_i)} x_{ij},
    \]
    which is exactly the polarization of \( m_1 \vee m_2 \). Hence,
    \[
    (m_1 \vee m_2)^p = m_1^p \vee m_2^p.
    \]
    This shows that the $\lcm$-lattices \( \mathcal{L}(I) \) and \( \mathcal{L}(I^p) \) are isomorphic.
\end{proof}





%

\end{document}